\documentclass[12pt]{amsart}

\usepackage{amsmath}
\usepackage{amssymb}  
\usepackage{latexsym} 
\usepackage{comment}
\usepackage{url}
\usepackage{fullpage,url,amssymb,amsmath,amsthm,amsfonts,mathrsfs}
\usepackage[usenames,dvipsnames]{color}
\usepackage[pagebackref = true, colorlinks = true, linkcolor = blue, citecolor = Green]{hyperref}
\usepackage[alphabetic,lite]{amsrefs}
\usepackage{enumitem}
\usepackage{amscd}   
\usepackage[all, cmtip]{xy} 
\usepackage{xfrac}
\usepackage[T1]{fontenc}
\usepackage{subfiles}
\usepackage{colonequals}




\usepackage[all]{xy}
\usepackage{fullpage}

\usepackage{color} 



\def\act#1#2%
  {\mathop{}%
   \mathopen{\vphantom{#2}}^{#1}%
   \kern-3\scriptspace%
   #2}


\newcommand{\F}{{\mathbb F}}





\newtheorem{Theorem}{Theorem}[section]
\newtheorem{Lemma}[Theorem]{Lemma}

\newtheorem{Definition}[Theorem]{Definition}
\newtheorem{Example}[Theorem]{Example}

\begin{document}

\title{Greedy capsets}

\author{Oliver Dawson}
\address{School of Mathematics and Statistics, University of Canterbury, Private Bag 4800, Christchurch 8140, New Zealand}
\email{oda35@uclive.ac.nz}

\author{Oleg Shuvaev}
\address{School of Mathematics and Statistics, University of Canterbury, Private Bag 4800, Christchurch 8140, New Zealand}
\email{osh22@uclive.ac.nz}

\author{Jos\'e Felipe Voloch}
\address{School of Mathematics and Statistics, University of Canterbury, Private Bag 4800, Christchurch 8140, New Zealand}
\email{felipe.voloch@canterbury.ac.nz}
\urladdr{http://www.math.canterbury.ac.nz/\~{}f.voloch}

\keywords{Affine spaces, Finite fields, Capsets}
\subjclass{51E22}

\begin{abstract}
A capset is a subset $C \subset \F_3^n$ with no three points on a line. We characterise the capsets produced by successively removing points from the ambient space such that the removed point has the maximum number of lines contained in the set of remaining points and passing through it until the set of remaining points contains no lines.
\end{abstract}

\maketitle
%

\section{Introduction}

A capset is a subset $C \subset \F_3^n$ with no three points on a line. A question that has attracted a lot of interest (see e.g. \cite{Tao},\cite{Matrix}) is the following: What is the largest size of a capset in $\F_3^n$ as a function of $n$? If we denote this value by $a(n)$ then it can be shown (see \cite[Proposition 2.2]{Tyrrell}) that $c=\lim a(n)^{1/n}$ exists. Moreover, it is known that $2.2202 \le c$ (\cite{Nature}) and $c \le 2.756$ (\cite{Jordan}). The exact value of $a(n)$ is known for $n \le 6$.

The best current lower bound for $a(8)$, namely $a(8) \ge 512$, was recently obtained via Machine Learning \cite{Nature}. The authors of that paper trained a Large Language Model to produce algorithms that generate capsets. They only considered algorithms that started from the empty set and successively added a point to the set as long as adding it did not violate the capset property. The algorithms varied on the different ways of selecting the next point to be added. The present paper arose from the idea that, instead, one could start with the whole space and successively remove points until one obtains a capset. There is a natural way for prioritizing the choice of the next removed point in this latter procedure, namely taking a point such that the number of lines contained in the set of remaining points and passing through the given point is maximum. We programmed this alternative approach (code available in \cite{git}) and, contrary to our expectations, it did not lead to large capsets. Indeed, it always gave capsets of size $2^n$ with a very specific structure (described in Definition \ref{def:greedy}). The purpose of this paper is to explain this phenomenon. This is achieved in Theorem \ref{thm:main}. Incidentally, we tried a few variants of this latter procedure (see \cite{git}) and these did not produce large capsets either.

\subsection*{Acknowledgements}
The third author was supported by the Ministry for Business, Innvovation and Employment in New Zealand and the Marsden Fund administered by the Royal
Society of New Zealand.

\section{Affine spaces}

We will always be working in the ambient space $\F_3^n$ for some integer $n \ge 1$. This is a vector space over the field $\F_3$ of three elements. So, we can consider (vector) subspaces of $\F_3^n$ and any of those has a dimension $d \ge 0$ and, because our field of scalars is $\F_3$, a subspace of dimension $d$ has $3^d$ elements. 

We can also talk about affine subspaces of $\F_3^n$, which are translations of vector subspaces and, therefore, we can also consider their dimension and, again, an affine subspace of dimension $d$ has $3^d$ elements. We will often refer to hyperplanes in an affine space and this simply means a subspace of codimension 1 (i.e. dimension 1 less than the ambient space). A hyperplane can be described as the set of solutions $(x_1,\ldots,x_n)$, $x_i \in \F_3$ of a single equation $H:\sum_i a_i x _i + b = 0$ where $a_i, b, \in \F_3$ and the $a_i$ are not all zero. The hyperplanes given by $\sum_i a_i x _i + c, c \ne b$ are the hyperplanes parallel to $H$.

A line is an affine subspace of dimension $1$, so a line has three points. It is not hard to show that three distinct points $P,Q,R$ form a line if and only if $P+Q+R=0$.
We say that a set $S$ generates an affine subspace $V \subset \F_3^n$ if $V$ is the smallest subspace containing $S$. Finally, we notice that a subgroup of $\F_3^n$ is a vector subspace since our field of scalars is just $\F_3 = \{0,1,2\}$.

\begin{Lemma}
\label{lem:grp}
Let $S$ be a subset of the affine space $V$ that generates $V$ and assume that for any two points $P,Q \in S$ the line $\overline{PQ}$ is contained in $S$. Then $S=V$.
\end{Lemma}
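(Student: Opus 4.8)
The plan is to reduce the statement to showing that $S$ is a vector subspace, after which the conclusion follows from the final remark of this section that any subgroup of $\F_3^n$ is a vector subspace. First I would normalise by translation. Since $S$ generates $V$ it is nonempty, so I pick a point $P_0 \in S$ and replace $S$ by $S - P_0$ and $V$ by $V - P_0$. Translation sends lines to lines, so it preserves both the generation hypothesis and the line-closure hypothesis, and clearly $S = V$ holds before translation if and only if it holds afterwards. Thus I may assume $0 \in S$, so that $V$ is now a vector subspace and it suffices to prove $S = V$ in this normalised situation.

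The heart of the argument is to verify that $S$ is closed under negation and addition. For negation, given $P \in S$ with $P \neq 0$, the distinct points $0$ and $P$ both lie in $S$, so the line $\overline{0P}$, which by the criterion $0 + P + (-P) = 0$ equals $\{0, P, -P\}$, is contained in $S$; hence $-P \in S$ (and the case $P = 0$ is trivial). For addition, take $P, Q \in S$ with $P \neq Q$. Negation-closure gives $-P, -Q \in S$, and since $P \neq Q$ these two points are distinct, so the line through them, namely $\{-P, -Q, P + Q\}$ because $(-P) + (-Q) + (P + Q) = 0$, lies in $S$; therefore $P + Q \in S$. The remaining coincidence $P = Q$ is handled by noting $P + Q = 2P = -P$ (as $2 = -1$ in $\F_3$), which lies in $S$ by negation-closure.

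Once $S$ is seen to contain $0$ and to be closed under addition and negation, it is a subgroup of $(\F_3^n, +)$, hence a vector subspace by the remark quoted above. A vector subspace is in particular an affine subspace containing $S$, so minimality of $V$ forces $V \subseteq S$, and together with $S \subseteq V$ this gives $S = V$. I expect the main obstacle to be purely a matter of careful bookkeeping around degenerate pairs: the notation $\overline{PQ}$ and the three-point description of a line both presuppose $P \neq Q$, so the negation and addition steps must treat the coincidences $P = 0$, $P = Q$, and $-P = -Q$ separately, exactly as indicated. Beyond that, every step is a routine application of the collinearity identity $P + Q + R = 0$.
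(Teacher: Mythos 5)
Your proposal is correct and follows essentially the same route as the paper: translate so that a point of $S$ becomes the origin, verify closure under negation and addition using the collinearity identity $P+Q+R=0$ (with the degenerate case $P=Q$ handled via $2P=-P$), and conclude that $S$ is a subgroup, hence a vector subspace, hence all of $V$. The only cosmetic difference is that you obtain $P+Q$ from the line through $-P$ and $-Q$, while the paper first obtains $-P-Q$ from the line through $P$ and $Q$ and then negates.
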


\begin{proof}
First of all, $S$ is nonempty, since an affine space cannot be generated by the empty set. We can take a point $P_0$ in $S$ and translate everything by $P_0$ so that $P_0 = 0$, the origin, and we are reduced to showing that $S$ is a subgroup of $\F_3^n$.

Given $P \in S$, the line $\overline{P0}$ contains $-P$, so $-P \in S$. Given $P,Q \in S$, if they are distinct, the line $\overline{PQ}$ contains $-P-Q$, so $-P-Q \in S$ and, by what we previously proved, it follows that $P+Q$ is in S. If $P=Q$, then $P+Q = 2P = -P$ is also in $S$ and so we've proved that $P+Q$ is in $S$, whenever $P,Q$ are in $S$, so $S$ is a subgroup of $V$.
\end{proof}

\begin{Lemma}
There are $(3^n-1)/2$ lines passing through any given point of $\F_3^n$.
\end{Lemma}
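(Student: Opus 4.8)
The plan is to reduce the count to a count of one-dimensional vector subspaces. First I would translate the ambient space so that the given point $P$ becomes the origin $0$. Since translation by $-P$ is a bijection of $\F_3^n$ that carries lines to lines (affine subspaces of dimension $1$ map to affine subspaces of dimension $1$), it restricts to a bijection between the lines through $P$ and the lines through $0$. Hence these two sets have the same cardinality, and it suffices to count the lines through the origin.

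Next I would identify the lines through the origin with the one-dimensional vector subspaces. Using the description in Section 2, a line through $0$ has the form $\{0, v, 2v\}$ for a nonzero vector $v$, and since $2v = -v$ in characteristic $3$ this is $\{0, v, -v\}$, namely the subspace spanned by $v$. Conversely every such set is a line through $0$. Thus each nonzero $v$ determines a line through the origin, and two nonzero vectors determine the same line exactly when each is a nonzero scalar multiple of the other, that is, when they are $v$ and $2v = -v$.

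The counting step is then to observe that there are $3^n - 1$ nonzero vectors in $\F_3^n$, and that the relation ``spans the same line through $0$'' partitions them into classes $\{v, -v\}$. The one point to check explicitly is that every such class has exactly two elements: if $v = -v$ then $2v = 0$, forcing $v = 0$, which contradicts $v \neq 0$. Here we use that $2$ is invertible modulo $3$, and this is precisely the place where the prime $3$ (rather than $2$) matters. It follows that the $3^n - 1$ nonzero vectors split into exactly $(3^n - 1)/2$ two-element classes, each corresponding to a unique line through the origin, yielding the claimed count. The argument is elementary throughout; the only delicate point is verifying the two-to-one correspondence just described.
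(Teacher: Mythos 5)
Your proof is correct and is essentially the same two-to-one counting argument as the paper's: the paper pairs each line through $P$ with the two choices of a second point $Q \neq P$ on it, while you translate $P$ to the origin and pair each line with the two nonzero vectors $v, -v$ spanning it. Your version just makes explicit (via the invertibility of $2$ in $\F_3$) the fact the paper takes for granted, namely that each line through $P$ contains exactly two points other than $P$.
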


\begin{proof}
For any fixed $P \in \F_3^n$ and any choice of $Q \ne P$ we can form the line $\ell=\overline{PQ}$ going through them. There are $3^n-1$ choices for $Q$ but, as any line has three points, for each line $\ell$, there are two choices for $Q$, so the number of lines through $P$ is $(3^n-1)/2$.
\end{proof}

\begin{Definition}
\label{def:capset}
A subset $C \subset \F_3^n$ is called a capset if it does not contain any line.
\end{Definition}

\section{Greedy constructions}

\begin{Definition}
\label{def:greedy}
A greedy construction of a capset is a procedure of the following kind. Start with $S_0 = \F_3^n$ and for $i=1,2,\ldots$ define $S_i = S_{i-1} \setminus \{P_i\}$, where the point $P_i$ is among those points $P \in S_{i-1}$ such that the number of lines through $P$ contained in $S_{i-1}$ is positive and maximal among all points of $S_{i-1}$. The procedure stops when $S_i$ is a capset. A capset generated by a greedy construction is called a greedy capset.
\end{Definition}

Since $\F_3^n$ is finite, the above procedure must terminate. If $S_i$ is not a capset, then the procedure continues to $S_{i+1}$ and it follows that the procedure always stops at a capset.

The output capset is not uniquely determined as there is a choice for $P_i$ whenever there is more than one point with the maximal count of lines. This certainly happens at the first step and can happen at subsequent ones as well.

The following result characterises greedy capsets.

\begin{Theorem}
\label{thm:main}
A subset $C \subset \F_3^n$ is a greedy capset if and only if it has the following structure. There is 
a hyperplane $H_0 \subset \F_3^n$ such that $C \cap H_0 = \emptyset$ and, if $H_1,H_2$ denote the two hyperplanes of $\F_3^n$ parallel to $H_0$, then $C \cap H_i, i=1,2$ are greedy capsets, so miss a hyperplane of $H_i$ and so on, recursively. In particular, $\# C = 2^n$.
\end{Theorem}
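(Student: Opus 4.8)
The plan is to argue by induction on $n$, the case $n=1$ being immediate: deleting any one of the three points of $\F_3^1$ leaves a greedy capset, and a $2$-element subset of $\F_3^1$ visibly has the asserted structure. For the inductive step it is cleaner to track the removed set $A = \F_3^n\setminus S_i$ and to reformulate the greedy rule. Writing $L=(3^n-1)/2$ for the number of lines through a point, I claim that for $P\notin A$ the number of lines through $P$ contained in $S_i$ equals $L-|A|+g_2(P)$, where $g_2(P)$ is the number of lines $\{P,Q,Q'\}$ with both $Q,Q'\in A$: each of the $|A|$ points of $A$ lies on a unique line through $P$, so the number of lines through $P$ meeting $A$ is $|A|-g_2(P)$. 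Since $L-|A|$ is independent of $P$, the greedy step removes a point maximising $g_2(P)=\tfrac12\,\#\{(Q,Q')\in A^2:Q+Q'=-P\}$, subject to the count of surviving lines being positive. The sanity checks match: from $A=\emptyset$ everything is tied, and after removing two points $P_1,P_2$ the third point $-(P_1+P_2)$ of their line is the unique maximiser, so it is forced.

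The first main step is \emph{span confinement}: until $A$ becomes a full hyperplane, the process stays inside $W=\operatorname{span}(A)$ and fills it. Two facts drive this. First, if $g_2(P)>0$ then $P=-(Q+Q')$ is the third point of a line $\overline{QQ'}$ with $Q,Q'\in A\subseteq W$, hence $P\in W$; so every point outside $W$ has $g_2=0$. Second, if $A$ is not an affine subspace then, by Lemma \ref{lem:grp}, $A$ fails to contain the line through some $Q,Q'\in A$, so its third point $R\in W\setminus A$ has $g_2(R)\ge 1$. Thus the maximum of $g_2$ is attained strictly inside $W$, and the greedy step deletes a point of $W$, leaving $\operatorname{span}(A)$ unchanged. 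Because the only affine subspace containing $A$ and lying in $W$ is $W$ itself, the process keeps deleting points of $W$ until $A=W$. Once $A=W$ is a subspace of dimension $d<n-1$, every surviving point is exterior with $g_2=0$, so all are tied and the next deletion enlarges $\operatorname{span}(A)$ to dimension $d+1$; since $|A|\le 3^{n-1}<L$ throughout, the count $L-|A|>0$ forbids stopping early. Iterating, $A$ fills subspaces of dimensions $0,1,2,\dots$ in turn until, for the first time, $A$ equals a hyperplane $H_0$.

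The second main step is \emph{decoupling}: once $A\supseteq H_0$, any line meeting both parallel hyperplanes $H_1,H_2$ is transversal to $H_0$ and hence meets $H_0\subseteq A$, so a surviving line lies entirely within $H_1$ or within $H_2$; consequently, for $P\in H_i$ the number of surviving lines through $P$ equals the corresponding count computed inside $H_i\cong\F_3^{n-1}$ for the removed set $A\cap H_i$. Thus from the moment $A=H_0$ the global process is exactly two independent greedy processes on $H_1$ and $H_2$, interleaved by whichever currently has the larger maximum; its restriction to each $H_i$ is a valid greedy run there. By the inductive hypothesis $C\cap H_i$ is a greedy capset of $H_i$, while $C\cap H_0=\emptyset$, giving the recursive structure and $\#C=2\cdot 2^{n-1}=2^n$. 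For the converse, given $C$ of the stated form I would produce a greedy run by first filling $H_0$ along an increasing chain of affine subspaces $W_0\subset W_1\subset\cdots\subset W_{n-1}=H_0$ with $\dim W_j=j$, breaking the tied free choices so as to stay inside $H_0$ (legal by span confinement), and then, using decoupling, interleaving greedy runs on $H_1,H_2$ that realise $C\cap H_1$ and $C\cap H_2$ (which exist by induction) by always deleting whichever side offers the larger maximal count.

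The step I expect to be most delicate is coordinating the hand-off between the two phases. One must verify that span confinement carries $A$ up through \emph{every} intermediate dimension and halts exactly at a hyperplane, and that decoupling cannot be triggered earlier — no hyperplane is contained in $A$ while $|A|<3^{n-1}$ — yet necessarily governs the process the instant $A=H_0$. The subtle point is that, once $\operatorname{span}(A)=\F_3^n$, span confinement becomes vacuous, so it is decoupling (together with the inductive hypothesis) that must explain why the process recurses into $H_1,H_2$ rather than continuing to fill all of $\F_3^n$; reconciling these two descriptions precisely at the transition $A=H_0$ is the heart of the argument.
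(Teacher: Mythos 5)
Your proposal is correct and follows essentially the same route as the paper's proof: induction on $n$, Lemma \ref{lem:grp} to show the removed set fills its span (so a hyperplane $H_0$ is deleted before anything outside it), decoupling into two independent greedy runs on $H_1$ and $H_2$, and the analogous two-phase construction for the converse. Your $g_2$ bookkeeping and the explicit chain of subspaces filled in turn are a cleaner and somewhat more detailed presentation of the same counting argument that the paper phrases in terms of the lines $\overline{PP_j}$ being distinct or not.
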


\begin{proof}
By induction on $n$. The base case $n=1$ follows since removing any one element of $\F_3^1$ produces a capset of $\F_3^1$ of size $2$, so both constructions are the same in this case.

To prove that a greedy capset has the claimed structure, we begin by showing the existence of the hyperplane $H_0$.

If we let $d_i$ denote the dimension of the affine space spanned by $P_1,\ldots,P_i$, then $d_i \le d_{i+1} \le d_i +1$. Since $C$ does not contain lines, $d_i=n$ if $S_i=C$, as the complement of a hyperplane contains lines. Hence, there is a value of $i$ with $d_i = n-1$. Let $i_0$ be the smallest such $i$ and $H_0$ the hyperplane spanned by $P_1,\ldots,P_{i_0}$. We claim that, for $i \ge i_0$, if $S_i \cap H_0 \ne \emptyset$, then $P_{i+1} \in H_0$.

Let's prove the claim. If $P \not\in H_0$, then the lines $\overline{PP_j}, j \le i$ are all distinct, since the $P_j$ are in $H_0$. All other lines through $P$ are contained in $S_i$. So, we need to show that there is $P \in S_i \cap H_0$ such that not all lines $\overline{PP_j}, j \le i$ are distinct. If that does not happen then $\overline{P_kP_j}, j \le k$ does not meet $S_i$, which means
that $\{P_1,\ldots,P_i\}$ satisfies the hypotheses of Lemma \ref{lem:grp} and thus is a linear space by that lemma.  This in turn implies that $\{P_1,\ldots,P_i\} = H_0$, proving the claim.

Once the procedure removes the hyperplane $H_0$, a line contained in the remaining set has to be contained in $H_1$ or $H_2$. Therefore the procedure amounts to running two simultaneous procedures of the same kind in $H_1$ and $H_2$. The claimed structure then follows by induction.

We now show that a subset $C \subset \F_3^n$ with the structure described in the statement of the theorem is a greedy capset, again by induction on $n$. To show it is a capset, consider a line $\ell$. If $\ell$ is not contained in some $H_i, i=0,1,2$, then it meets $H_i$ in exactly one point, for $i=0,1,2$ so it meets $C$ in at most two points. If $\ell$ is contained in $H_0$ it does not meet $C$, whereas if it is contained in $H_i, i=1,2$ it meets $C$ in at most two points since $C \cap H_i$ is a capset, by induction. 

To show it is greedy, note first that a line that contains two points of $H_0$, stays in $H_0$. It follows that if $P$ is a point not in $H_0$, the lines $\overline{PQ}, Q \in H_0$ are distinct. So, if we are in a stage of the construction in which we only removed points from $H_0$ to form $S_{i-1}$, this will be a greedy construction, as the number of lines contained in $S_{i-1}$ through a point of $H_0$ is at least big as the number of lines through a point not on $H_0$, which is $(3^n-1)/2 - (i-1)$. Hence, selecting a point on $H_0$ for removal is greedy. Once $H_0$ is removed, the result follows by induction on $n$.

Finally, to show that $\# C = 2^n$, let $H_0$ be a hyperplane with $C \cap H_0 = \emptyset$ and $H_1,H_2$ denote the two hyperplanes of $\F_3^n$ parallel to $H_0$, with $C \cap H_i, i=1,2$ greedy capsets. Then $\#C \cap H_i = 2^{n-1}, i=1,2$, by induction, so $\# C = 2^n$.

\end{proof}

\begin{Example} 
\label{ex:01} 
The set $C=\{0,1\}^n \subset \F_3^n$ is a greedy capset. Indeed, $C$ is obtained by first removing the hyperplane $H_0$ with equation $x_n=2$, then removing the hyperplanes with equation $x_{n-1}=2$ within the hyperplanes with equations $x_n=0, 1$ and so on.
\end{Example}

\begin{Definition}
\label{def:complete}
A capset $C \subset \F_3^n$ is a complete capset (in $\F_3^n$) if it is not contained in a larger capset of $\F_3^n$.
\end{Definition}

The capset of Example \ref{ex:01} is complete. If $R \in \F_3^n$ is not in $C$, define $P, Q$ as follows: If the $i$-th coordinate of $R$ is $2$, let the $i$-th coordinate of $P$ be $0$ and the $i$-th coordinate of $Q$ be $1$ and if the $i$-th coordinate of $R$ is $a\ne 2$, let the $i$-th coordinate of both $P,Q$ be $a$. It is then easy to show that $P,Q$ are distinct (because $R$ is not in $C$) and belong to $C$ and $P,Q,R$ are collinear. 

Not all greedy capsets are complete, as the following example shows.

\begin{Example} 
\label{ex:quadric} 
Let $S \subset \F_3^3$ be the set of solutions $(x,y,z)$ of the equation $z=x^2+y^2$. Then it can be checked directly that $S$ is a capset ($S$ is an example of an elliptic quadric) and $\#S = 9$. If we remove the origin $(0,0,0)$ from $S$ we get a capset $C$ that doesn't meet the hyperplane with equation $z=0$ and, from this, it follows easily that $C$ is a greedy capset, which is not complete by construction.
\end{Example}

It would be very interesting to understand the completions of general greedy capsets, in particular, how big they can get.

\begin{bibdiv}

\begin{biblist}

\bib{Matrix}{article}{
   author={Blasiak, Jonah},
   author={Church, Thomas},
   author={Cohn, Henry},
   author={Grochow, Joshua A.},
   author={Naslund, Eric},
   author={Sawin, William F.},
   author={Umans, Chris},
   title={On cap sets and the group-theoretic approach to matrix
   multiplication},
   journal={Discrete Anal.},
   date={2017},
   pages={Paper No. 3, 27},
}

\bib{git}{webpage}{
author={Dawson, Oliver}, 
author={Shuvaev, Oleg},
author={Voloch, Jos\'e Felipe},
title={capsets},
url={https://github.com/Chair-and-table/capsets},
}

\bib{Jordan}{article}{
   author={Ellenberg, Jordan S.},
   author={Gijswijt, Dion},
   title={On large subsets of $\Bbb F^n_q$ with no three-term arithmetic
   progression},
   journal={Ann. of Math. (2)},
   volume={185},
   date={2017},
   number={1},
   pages={339--343},
}

\bib{Nature}{article}{
author={Balog, Matej},
author={Kumar, M. Pawan},
author={Dupont, Emilien},
author={Ruiz, Francisco J. R.},
author={Ellenberg, Jordan S.},
author={Wang, Pengming},
author={Fawzi, Omar},
author={Kohli, Pushmeet},
author={Fawzi, Alhussein},
title={Mathematical discoveries from program search with large language models},
journal={Nature},
volume={625},
date={2024},
pages={468--475},
}

\bib{Tao}{webpage}{
author={Tao, Terence},
title={Open question: best bounds for cap sets},
url={https://terrytao.wordpress.com/2007/02/23/open-question-best-bounds-for-cap-sets/},
}

\bib{Tyrrell}{article}{
   author={Tyrrell, Fred},
   title={New lower bounds for cap sets},
   journal={Discrete Anal.},
   date={2023},
   pages={Paper No. 20, 18},
   review={\MR{4684862}},
}

\end{biblist}
\end{bibdiv}


\end{document}